\documentclass[11pt]{article}

\usepackage{theorem}
\usepackage{amsmath,amssymb,amsfonts}
\usepackage{latexsym}
\usepackage{mathrsfs}
\usepackage{lscape}
\usepackage{enumerate}
\usepackage{hyperref}

\usepackage{longtable}

  \newtheorem{theorem}{Theorem}[section]

  \newtheorem{lemma}[theorem]{Lemma}
  
  \newtheorem{definition}[theorem]{Definition}
  \newtheorem{example}[theorem]{Example}
  \newtheorem{remark}[theorem]{Remark}
  \newtheorem{conjeture}[theorem]{Conjeture}
  \numberwithin{equation}{section}

\newenvironment{proof}{\noindent  Proof:\ }{\hspace*{\fill} $\Box $\\}


\def\N{\mathbb{N}}
\def\F{\mathbb{F}}
\def\P{\mathbb{P}}


\def\codeC{\mathscr{C}}

\title{Some constructions of cyclic and quasi cyclic subspaces codes}

\author{Ismael Gutierrez Garc\'ia$^{1}$ and Ivan Molina Naizir$^{2}$\\ 
\small{Department of Mathematics and Statistics}\\
\small{Universidad del Norte - Barranquilla, Colombia}\\
\small{isgutier@uninorte.edu.co$^{1}$}\\
\small{inaizir@uninorte.edu.co$^{2}$}}

\begin{document}
\maketitle

\begin{abstract}
In this paper we construct, using GAP System for Computational Discrete Algebra, some cyclic subspace codes, specially an optimal code over the finite field $\F_{2^{{10}}}$.  Further we introduce the $q$-analogous of a $m$-quasi cyclic subspace code over finite fields.
\end{abstract}

\textbf{Keywords.}
Finite fields, subspace codes, orbits, quasi-orbits, cyclic and quasi-cyclic subspace Codes.

\section{Introduction}
Delivery of data in a conventional communication network like the Internet is performed by routing, i.e. intermediate nodes store and forward incoming information. Network coding is a scheme in which intermediate nodes are allowed to mix the incoming information before forwarding which results in a higher information flow rate and a high degree of robustness.

Network coding is a new research area in information theory that may have interesting applications in practical networking systems, like peer-to-peer content distribution network, bidirectional traffic in a wireless network, residential wireless mesh networks, Ad-hoc sensor networks, and others \cite{Fragouli}. As in classical algebraic coding theory, one of the most important research area in random network coding is the existence and construction of subspace codes with good parameters.

In this paper we consider the construction of a special class of subspace codes, namely cyclic codes. This codes were introduced by A. Kohnert and S. Kurz in \cite{Kurz} from an algebraic and geometric point of view. Later T. Etzion and A. Vardy in \cite{Etzion2} have defined them using the concept of cyclic shift for a subspace in a fixed Grassmannian over a finite field. 

Let $\F_q^n$ be the $n$-dimensional vector space over  the finite field, with $q$ elements, $\F_q$ (where $q$ is a prime power). We denote with $\P_q(n)$ the projective space of order $n$, that is, the set of all subspaces of $\F_q^n$, including the null space and $\F_q^n$ itself.

Let $\F_{q^n}$ be the extension field of $\F_q$ (of degree $n$). It is well known that we may regard $\F_{q^n}$ as a vector space of dimension $n$ over $\F_q$. That is, for a fixed basis, we can identifier every element of $\F_{q^n}$ with a $n$-tuple of elements in $\F_q$. Therefore we will not distinguish between $\F_{q^n}$ and $\F_q^n$.

For a fixed natural number $k\leq n$ we denote with $G_q(n,k)$ the set of all subspaces of $\F_q^n$ of dimension $k$ and we call it the $k$-Grassmannian. Then
\[\P_q(n) = \bigcup_{k=0}^n G_q(n,k).\]
The cardinality of the set $G_q(n,k)$ is given by the $q$-ary Gaussian coefficient ${n\brack k}_q$. It is well known that
\[{n\brack k}_q = \frac{(q^n-1)(q^{n-1}-1)\cdots (q^{n-k+1}-1)}{(q^k-1)(q^{k-1}-1)\cdots (q-1)}.\]

R. K\"otter and F. R. Kschischang \cite{Koetter1} have proven that the set $\P_q(n)$ endowed with the distance $d$ defined by
\begin{align*}
d(U, V) &= \dim(U + V) - \dim(U\cap V)\\
&= \dim(U) + \dim(V) - 2 \dim(U\cap V)
\end{align*}
is a metric space. The distance $d$ is called the subspace distance. A \texttt{subspace code} $\codeC$ is a non empty subset of $\P_q(n)$. The elements of $\codeC$ are called \texttt{codewords}. The \texttt{minimum distance} $d(\codeC)$ of a subspace code $\codeC$ is defined as usually, that is, as the smallest distance between any two different elements of $\codeC$.
Let $\codeC$ be a subspace code of minimum distance $d$. Then we say that $\codeC$ is a $[n,|\codeC|,d]$-code over $\F_q$ and $[n,|\codeC|,d]$ are its parameters. 

Constant dimension codes in network coding are the analogues of constant weight codes in classical coding theory. A \texttt{constant dimension code} $\codeC$ is just a non empty subset of $G_q(n,k)$.  If $\codeC \subseteq G_q(n,k)$ have minimum distance $d$, then we say that $\codeC$ is a $[n,k,|\codeC|,d]$-code over $\F_q$ and its parameters are given by $[n,k,|\codeC|,d]$. In this case, if $U, V\in \codeC$, then 
\[d(U,V) = 2k-2\dim(U\cap V).\] 
Thus $d(\codeC)$ is always an even number.

Recently T. Etzion et al. have presented in \cite{Etzion1} a method for constructing cyclic subspace codes, which includes some special kind of linearized polynomials, namely subspaces polynomials and also Frobenius mappings. 

Using groups actions, J. Rosenthal et al. \cite{Rosenthal} and H. Gluesing et al. \cite{Gluesing} studied a general version of cyclic subspace codes, the cyclic orbits codes. Specifically, they have used an action of the general linear group $\mathrm{GL}(n,q)$ over the set of all $k$-dimensional subspaces of $\F_q^n$ to define them. Nevertheless, in \cite{Rosenthal} no nontrivial construction of such codes is given and in \cite{Gluesing} is presented the construction of a cyclic subspace code, with no full length orbit. In both articles, it becomes relevant the following conjecture:

\begin{conjeture}\label{conjeture1}
For every  positive  integers $n, k$ such that $k< n/2$, there  exists  a subspace cyclic  code  of size $\frac{q^n-1}{q-1}$ in $G_q(n,k)$ and minimum distance $2k-2$.
\end{conjeture}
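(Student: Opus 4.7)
The plan is to realize the code as a single full-length orbit of a Singer cycle acting on $G_q(n,k)$, and then to force pairwise intersections to be small via a subspace-polynomial argument. Identify $\F_q^n$ with $\F_{q^n}$ and fix a primitive element $\alpha \in \F_{q^n}^{*}$; multiplication by $\alpha$ permutes the $\F_q$-subspaces of $\F_{q^n}$. Since $\F_q^{*} = \langle \alpha^{(q^n-1)/(q-1)} \rangle$ fixes every such subspace, the orbit of any $U \in G_q(n,k)$ has size dividing $(q^n-1)/(q-1)$, and a full-length orbit realizes the desired cardinality. The minimum-distance condition $d(\codeC) \geq 2k-2$ amounts to asking that $\dim_{\F_q}(U \cap \alpha^i U) \le 1$ whenever $\alpha^i \notin \F_q^{*}$; the hypothesis $k < n/2$ is consistent with this, since the forced intersection coming from dimension counting, namely $\max(0, 2k-n)$, vanishes.

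The natural tool is the subspace polynomial $f_U(x) = \prod_{u \in U}(x-u)$, a monic $q$-linearized polynomial of $q$-degree $k$. A direct computation gives $f_{\alpha^i U}(x) = \alpha^{i q^k} f_U(\alpha^{-i} x)$, so the coefficient of $x^{q^j}$ in $f_{\alpha^i U}$ is just the corresponding coefficient of $f_U$ rescaled by $\alpha^{i(q^k - q^j)}$. By the standard symbolic-GCD identity for linearized polynomials, $\dim_{\F_q}(U \cap \alpha^i U)$ equals the $q$-degree of the symbolic greatest common divisor of $f_U$ and $f_{\alpha^i U}$, so both requirements, full-length orbit and small intersections, translate into explicit algebraic conditions on the coefficients of $f_U$.

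The concrete candidate I would try is the Frobenius-type subspace $U = \mathrm{span}_{\F_q}\{1, \beta, \beta^q, \dots, \beta^{q^{k-1}}\}$ for a suitably chosen $\beta \in \F_{q^n}^{*}$, in the spirit of the construction by Etzion et al.\ cited in the introduction. For such $U$, the coefficients of $f_U$ become explicit polynomial expressions in $\beta$, and the bad values of $\beta$, those producing either a short orbit or a shift with $\F_q$-intersection of dimension at least $2$, cut out a proper algebraic subset of $\F_{q^n}$; if one can bound the number of bad values strictly below $q^n-1$, existence of a good $\beta$ follows immediately by pigeonhole. The main obstacle is precisely this counting. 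The shifts $\alpha^i U$ for $1 \le i < (q^n-1)/(q-1)$ are strongly correlated, so a naive union bound over them loses too much. One would need either a single resultant-type identity packaging all shifts into a polynomial in $\beta$ of controllable degree, or an inductive construction exploiting subfield structure when $\gcd(n,k)>1$ together with a separate argument in the coprime case. The inequality $k<n/2$ should enter through $\dim(U+\alpha^i U)\le n$, which gives just enough room for the symbolic GCD degrees to remain bounded, and making this quantitative is the heart of the argument.
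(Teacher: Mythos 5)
There is nothing in the paper to compare your attempt against: the statement is presented as a \emph{conjecture}, the paper offers no proof of it, and in fact Section 3 contains a remark asserting that the authors' computations (together with the earlier counterexample of Gluesing et al.) \emph{refute} it --- although the case they exhibit, $n=10$, $k=5$, $d=8$ over $\F_2$, has $k=n/2$ and so lies outside the hypothesis $k<n/2$. In any event, a complete proof on your part would be new mathematics rather than a reconstruction of an argument contained in the paper.

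Your proposal is not such a proof. The reductions you carry out are all correct: the orbit of $U$ under $\langle\alpha\rangle$ has size dividing $\frac{q^n-1}{q-1}$ because $\F_q^{\ast}$ stabilizes every $\F_q$-subspace; the distance requirement is equivalent to $\dim_{\F_q}(U\cap\alpha^iU)\le 1$ for shifts outside the stabilizer; and the identity $f_{\alpha^iU}(x)=\alpha^{iq^k}f_U(\alpha^{-i}x)$ together with the symbolic-gcd dictionary is the standard subspace-polynomial machinery of the Etzion et al.\ reference cited in the introduction. But you stop precisely at the step that constitutes the entire difficulty: bounding the number of ``bad'' $\beta$ below $q^n-1$. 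You yourself observe that a naive union bound over the $\frac{q^n-1}{q-1}$ correlated shifts is too lossy, and you do not supply the resultant-type identity or the inductive subfield mechanism that would replace it; in the existing literature this strategy is known to yield the conjecture only for restricted parameter ranges (e.g.\ $n$ large relative to $k$, or for specific trinomial subspace polynomials), not uniformly for all $k<n/2$. As written, the proposal is a research plan whose central lemma is unproved, so it does not establish the statement --- and given that the paper itself reports computational evidence against nearby parameter choices, any completion of your plan would have to make the admissible range of $(n,k,q)$ completely explicit.
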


Let $\mathcal{A}_q(n,d)$, respectively $\mathcal{A}_q(n,d,k)$, denote the maximum number of codewords in an $[n,|\codeC|,d]$-code in $\P_q(n)$, respectively $[n,k,|\codeC|,d]$-code in $G_q(n,k)$. Due the minimum distance for a constant dimension code is always an even number, it suffices to consider $\mathcal{A}_q(n,d,k)$ for $d=2\delta$. T. Etzion and A. Vardy established in \cite{Etzion2} the following bound:
\begin{equation}\label{bound1}
\mathcal{A}_q(n,2 \delta+2,k)\leq  \frac{{n \brack k}_q}{{n-k+\delta \brack \delta}_q}.
\end{equation}

In this paper we present the construction of cyclic subspace codes. To do that, we have design and implemented in Java and C++ a set of algorithm. Furthermore, we use the library for finite fields contained in GAP System for Computational Discrete Algebra \cite{GAP}. Also we present the definition of $m$-quasi cyclic subspace codes as a natural generalization of cyclic codes and we show some results about the size of a quasi orbit.

\section{Cyclic subspace codes}

Let $\gamma$ be a primitive element of $\F_{q^n}$. A subspace code $\codeC\subseteq \P_q(n)$ is called \texttt{cyclic}, if it has the following property: 
\[\{0,\gamma^{i_1},\gamma^{i_2},\ldots, \gamma^{i_s}\}\in \codeC \Rightarrow \{0,\gamma^{i_1+1},\gamma^{i_2+1}, \ldots, \gamma^{i_s+1}\}\in \codeC.\]
(Assuming that $s=q^k-1$, with $k$ the dimension of the codeword).

Given a cyclic subspace code $\codeC$ and a $V\in \codeC$, we associate the corresponding binary characteristic vector $x_V = (x_0, x_1, \ldots, x_{q^n-2})$ as follows: 
\[x_j = \begin{cases}
1 & \text{if} \ \gamma^j \in V\\
0 & \text{if} \ \alpha^j \notin V.
\end{cases}\]

Then the set of all such characteristic vectors is closed under cyclic shifts. Note that the property of being cyclic does not depend on the choice of a primitive element in $\F_q$. 

A trivial construction of a cyclic subspace code is the following.

\begin{example}\label{ejemplo1}
Let $\gamma$ be a primitive root of $x^{10}+x^6+x^5+x^3+x^2+x+1$ and use this polynomial to generate the field $\F_{2^{10}}$. Let $\codeC\subseteq G_2(10,5)$ defined as follow:
\[\codeC := \{\alpha\F_{2^5}\mid \alpha\in \F_{2^{10}}^\ast\}.\]
That is, $\codeC$ is an orbit of $\F_{2^5}$ in $\F_{2^{10}}$. Note that the nonzero elements of $\F_{2^5}$ is a cyclic group generated by $\gamma^{33}$. Then the size of $\codeC$ is $\frac{2^{10}-1}{2^5-1} = 33$. On the other hand, note that if $U, V\in \codeC$, with $U\neq V$, then $dim(U\cap V) = 0$. Therefore, 
for $\alpha, \alpha'\in \F_{2^{10}}^\ast$ we have
\[d(\alpha\F_2^5,\alpha'\F_2^5) = 5+5-2\cdot 0=10.\]
Then $\codeC$ is an $[10,5,33,10]$-code. Specifically $\codeC$ consists of all
cyclic shifts of
\begin{align*}
& \{\gamma^0, \gamma^{33}, \gamma^{66}, \gamma^{99}, \gamma^{132}, \gamma^{165}, \gamma^{198}, \gamma^{231}, \gamma^{264}, \gamma^{297}, \gamma^{330}, \gamma^{363}, \gamma^{396}, \gamma^{429}, \\ 
& \ \  \gamma^{462}, \gamma^{495}, \gamma^{528}, \gamma^{561}, \gamma^{594}, \gamma^{627}, \gamma^{660}, \gamma^{693}, \gamma^{726}, \gamma^{759}, \gamma^{792}, \gamma^{825}, \gamma^{858}, \\ 
& \ \ \gamma^{891}, \gamma^{924}, \gamma^{957}, \gamma^{990}\}. 
\end{align*}
\end{example}

Note that in this example the zero vector was omitted from the set. Hereinafter, this will be explicitly deleted when we specifies the elements of a codeword of a cyclic or quasi-cyclic subspace code.

\begin{remark}
A subset $S$ of $G_q(n,k)$ is called a \texttt{spread} of $\F_q^n$, if holds
\begin{enumerate}
	\item[$(a)$] $V\cap W = \{0\}$, for all $V, W \in S$, and
	\item[$(b)$] for any $0\neq v\in \F_q^n$, exists an unique $V\in S$, such that $v\in V$.
\end{enumerate}
It is known, that spreads exists if and only if $k\mid n$. The code of Example \ref{ejemplo1} is a Spread of $\F_2^{10}$.
\end{remark}

\begin{example}\label{ejemplo2}
Let $\gamma$ be a primitive root of $x^{10}+x^6+x^5+x^3+x^2+x+1$ and use this polynomial to generate the field $\F_{2^{10}}$. Let $\codeC\subseteq G_2(10,3)$, which consists of all cyclic shifts of
\begin{align*}
& \{\gamma^0,\gamma^{37},\gamma^{104},\gamma^{170},\gamma^{251},\gamma^{269},\gamma^{576}\}\\
& \{\gamma^0,\gamma^{68},\gamma^{240},\gamma^{257},\gamma^{389},\gamma^{560},\gamma^{587}\}\\
& \{\gamma^0,\gamma^{126},\gamma^{169},\gamma^{243},\gamma^{452},\gamma^{487},\gamma^{707}\}\\
& \{\gamma^0,\gamma^{164},\gamma^{324},\gamma^{491},\gamma^{684},\gamma^{710},\gamma^{762}\}\\
& \{\gamma^0,\gamma^{59},\gamma^{295},\gamma^{418},\gamma^{537},\gamma^{631},\gamma^{718}\}\\
& \{\gamma^0,\gamma^{21},\gamma^{36},\gamma^{335},\gamma^{365},\gamma^{650},\gamma^{711}\}\\
& \{\gamma^0,\gamma^{70},\gamma^{173},\gamma^{380},\gamma^{457},\gamma^{654},\gamma^{811}\}\\
& \{\gamma^0,\gamma^{10},\gamma^{216},\gamma^{469},\gamma^{544},\gamma^{613},\gamma^{635}\}\\
& \{\gamma^0,\gamma^{105},\gamma^{161},\gamma^{289},\gamma^{424},\gamma^{517},\gamma^{565}\}\\
& \{\gamma^0,\gamma^{156},\gamma^{306},\gamma^{382},\gamma^{488},\gamma^{678},\gamma^{789}\}\\
& \{\gamma^0,\gamma^{31},\gamma^{42},\gamma^{131},\gamma^{143},\gamma^{241},\gamma^{399}\}\\
& \{\gamma^0,\gamma^{109},\gamma^{247},\gamma^{249},\gamma^{374},\gamma^{432},\gamma^{476}\}\\
& \{\gamma^0,\gamma^{124},\gamma^{246},\gamma^{524},\gamma^{527},\gamma^{577},\gamma^{672}\}\\
& \{\gamma^0,\gamma^{49},\gamma^{163},\gamma^{235},\gamma^{440},\gamma^{628},\gamma^{802}\}\\
& \{\gamma^0,\gamma^{60},\gamma^{176}, \gamma^{291},\gamma^{353},\gamma^{553},\gamma^{786}\}\\
& \{\gamma^0,\gamma^{107},\gamma^{286},\gamma^{441},\gamma^{482},\gamma^{578},\gamma^{793}\}\\
& \{\gamma^0,\gamma^{4},\gamma^{208},\gamma^{222},\gamma^{254},\gamma^{279},\gamma^{502}\}\\
& \{\gamma^0,\gamma^{298},\gamma^{515},\gamma^{523},\gamma^{543},\gamma^{588},\gamma^{607}\}\\
& \{\gamma^0,\gamma^{1},\gamma^{154},\gamma^{192},\gamma^{575},\gamma^{609},\gamma^{622}\}\\
& \{\gamma^0,\gamma^{7},\gamma^{23},\gamma^{175},\gamma^{434},\gamma^{497},\gamma^{580}\}\\
& \{\gamma^0,\gamma^{252},\gamma^{258},\gamma^{338}, \gamma^{518},\gamma^{680},\gamma^{719}\}.
\end{align*}

Therefore $\codeC$ is a $[10,3,21483,4]$-code and  we have 
\[21483 \leq \mathcal{A}_2(10,4,3) \leq 24893.\]

Nowadays this code is in second place in Tables of subspace codes, \cite{SubspaceCodesTable}, \cite{SubspaceCodesTable2}, organized by S. Kurz et al. at Mathematical Institute in University of Bayreuth - Germany.
\end{example}

\begin{example}\label{example3}
Let $\gamma$ be a primitive root of $x^8+x^4+x^3+x^2+1$ and use this polynomial to generate the field $\F_{2^{8}}$. Let $\codeC \subseteq G_2(8,4)$, which consists of all cyclic shifts of

\begin{align*}
& \{ \gamma^{0}, \gamma^{2}, \gamma^{29}, \gamma^{39}, \gamma^{49}, \gamma^{50}, \gamma^{60}, \gamma^{71}, \gamma^{74}, \gamma^{103}, \gamma^{106}, \gamma^{109}, \gamma^{132}, \gamma^{181}, \gamma^{197}\}\\
& \{ \gamma^{0}, \gamma^{2}, \gamma^{31}, \gamma^{45}, \gamma^{50}, \gamma^{91}, \gamma^{110}, \gamma^{123}, \gamma^{126}, \gamma^{163}, \gamma^{182}, \gamma^{183}, \gamma^{205}, \gamma^{207}, \gamma^{209}\}\\
& \{ \gamma^{0}, \gamma^{23}, \gamma^{64}, \gamma^{70}, \gamma^{79}, \gamma^{97}, \gamma^{110}, \gamma^{124}, \gamma^{126}, \gamma^{154}, \gamma^{174}, \gamma^{180}, \gamma^{190}, \gamma^{196}, \gamma^{201}\}\\
& \{ \gamma^{0}, \gamma^{1}, \gamma^{25}, \gamma^{38}, \gamma^{81}, \gamma^{94}, \gamma^{124}, \gamma^{155}, \gamma^{156}, \gamma^{159}, \gamma^{160}, \gamma^{169}, \gamma^{180}, \gamma^{184}, \gamma^{202}\}\\
& \{ \gamma^{0}, \gamma^{1}, \gamma^{25}, \gamma^{56}, \gamma^{64}, \gamma^{65}, \gamma^{70}, \gamma^{71}, \gamma^{89}, \gamma^{95}, \gamma^{109}, \gamma^{131}, \gamma^{162}, \gamma^{176}, \gamma^{203}\}\\
& \{ \gamma^{0}, \gamma^{16}, \gamma^{31}, \gamma^{45}, \gamma^{49}, \gamma^{88}, \gamma^{114}, \gamma^{145}, \gamma^{155}, \gamma^{159}, \gamma^{166}, \gamma^{171}, \gamma^{175}, \gamma^{197}, \gamma^{211}\}\\
& \{ \gamma^{0}, \gamma^{7}, \gamma^{30}, \gamma^{46}, \gamma^{66}, \gamma^{76}, \gamma^{87}, \gamma^{88}, \gamma^{89}, \gamma^{112}, \gamma^{113}, \gamma^{137}, \gamma^{167}, \gamma^{175}, \gamma^{203}\}\\
& \{ \gamma^{0}, \gamma^{5}, \gamma^{10}, \gamma^{21}, \gamma^{37}, \gamma^{40}, \gamma^{76}, \gamma^{84}, \gamma^{113}, \gamma^{114}, \gamma^{138}, \gamma^{143}, \gamma^{150}, \gamma^{166}, \gamma^{179}\}\\
& \{ \gamma^{0}, \gamma^{8}, \gamma^{16}, \gamma^{54}, \gamma^{69}, \gamma^{87}, \gamma^{125}, \gamma^{130}, \gamma^{145}, \gamma^{163}, \gamma^{167}, \gamma^{182}, \gamma^{194}, \gamma^{200}, \gamma^{208}\}\\
& \{ \gamma^{0}, \gamma^{40}, \gamma^{41}, \gamma^{53}, \gamma^{65}, \gamma^{80}, \gamma^{84}, \gamma^{98}, \gamma^{124}, \gamma^{139}, \gamma^{147}, \gamma^{157}, \gamma^{162}, \gamma^{168}, \gamma^{180}\}\\
& \{ \gamma^{0}, \gamma^{27}, \gamma^{59}, \gamma^{62}, \gamma^{82}, \gamma^{89}, \gamma^{90}, \gamma^{104}, \gamma^{114}, \gamma^{117}, \gamma^{122}, \gamma^{125}, \gamma^{166}, \gamma^{194}, \gamma^{203}\}\\
& \{ \gamma^{0}, \gamma^{19}, \gamma^{47}, \gamma^{62}, \gamma^{78}, \gamma^{80}, \gamma^{90}, \gamma^{92}, \gamma^{101}, \gamma^{128}, \gamma^{140}, \gamma^{168}, \gamma^{205}, \gamma^{207}, \gamma^{212}\}\\
& \{ \gamma^{0}, \gamma^{9}, \gamma^{28}, \gamma^{38}, \gamma^{47}, \gamma^{49}, \gamma^{93}, \gamma^{97}, \gamma^{101}, \gamma^{120}, \gamma^{158}, \gamma^{184}, \gamma^{190}, \gamma^{193}, \gamma^{197}\}\\
& \{ \gamma^{0}, \gamma^{7}, \gamma^{9}, \gamma^{57}, \gamma^{62}, \gamma^{64}, \gamma^{70}, \gamma^{72}, \gamma^{83}, \gamma^{90}, \gamma^{112}, \gamma^{120}, \gamma^{156}, \gamma^{169}, \gamma^{195}\}\\
& \{ \gamma^{0}, \gamma^{7}, \gamma^{47}, \gamma^{59}, \gamma^{79}, \gamma^{82}, \gamma^{91}, \gamma^{94}, \gamma^{101}, \gamma^{112}, \gamma^{148}, \gamma^{174}, \gamma^{202}, \gamma^{206}, \gamma^{209}\}\\
& \{ \gamma^{0}, \gamma^{6}, \gamma^{12}, \gamma^{49}, \gamma^{53}, \gamma^{58}, \gamma^{107}, \gamma^{127}, \gamma^{147}, \gamma^{149}, \gamma^{156}, \gamma^{169}, \gamma^{188}, \gamma^{191}, \gamma^{197}\}\\
& \{ \gamma^{0}, \gamma^{1}, \gamma^{8}, \gamma^{20}, \gamma^{25}, \gamma^{42}, \gamma^{76}, \gamma^{93}, \gamma^{113}, \gamma^{135}, \gamma^{144}, \gamma^{151}, \gamma^{158}, \gamma^{178}, \gamma^{200}\}\\
& \{ \gamma^{0}, \gamma^{13}, \gamma^{14}, \gamma^{38}, \gamma^{54}, \gamma^{85}, \gamma^{98}, \gamma^{99}, \gamma^{123}, \gamma^{139}, \gamma^{170}, \gamma^{183}, \gamma^{184}, \gamma^{208}, \gamma^{224}\}\\
& \{ \gamma^{0}, \gamma^{13}, \gamma^{14}, \gamma^{38}, \gamma^{54}, \gamma^{85}, \gamma^{98}, \gamma^{99}, \gamma^{123}, \gamma^{139}, \gamma^{170}, \gamma^{183}, \gamma^{184}, \gamma^{208}, \gamma^{224}\}\\
& \{ \gamma^{0}, \gamma^{9}, \gamma^{32}, \gamma^{35}, \gamma^{37}, \gamma^{85}, \gamma^{94}, \gamma^{117}, \gamma^{120}, \gamma^{122}, \gamma^{170}, \gamma^{179}, \gamma^{202}, \gamma^{205}, \gamma^{207}\}
\end{align*}

The code $\codeC$ is a $[8,4,4590,4]$-code and it holds
\[4590 \leq \mathcal{A}_2(8,4,4) \leq 6477.\]
This code is too in second place in Tables of subspace codes, \cite{SubspaceCodesTable}, \cite{SubspaceCodesTable2}.
\end{example}

\begin{definition}
Let $\alpha\in \F_{q^n}^\ast$ and $V\in G_q(n,k)$. The \texttt{cyclic shift} or the \texttt{orbit} of $V$ is defined as follows:
\[\alpha V := \{\alpha v\mid v\in V\}.\] 
Then a subspace code $\codeC\subseteq G_q(n,k)$ is called \texttt{cyclic}, if for all $0\neq \alpha\in \F_{q^n}$ and all subspace $V\in \codeC$ we have $\alpha V\in \codeC$.
\end{definition}

\begin{lemma}\cite[Lemma 9]{Etzion1}
If $V\in G_q(n,k)$, then 
\[|\{\alpha V\mid \alpha\in \F_{q^n}^\ast\}| = \frac{q^n-1}{q^t-1},\] 
for some natural number $t$, which divides $n$.
\end{lemma}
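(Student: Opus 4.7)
The plan is to apply the orbit-stabilizer theorem to the action of $\F_{q^n}^\ast$ on $G_q(n,k)$ by multiplication, and then identify the stabilizer of $V$ with the multiplicative group of a subfield of $\F_{q^n}$.

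First I would set $H := \{\alpha \in \F_{q^n}^\ast \mid \alpha V = V\}$, which is a subgroup of $\F_{q^n}^\ast$. Since the action is transitive on the orbit, the orbit-stabilizer theorem gives
\[|\{\alpha V \mid \alpha \in \F_{q^n}^\ast\}| = \frac{q^n - 1}{|H|}.\]
Therefore it suffices to prove that $|H| = q^t - 1$ for some $t$ dividing $n$.

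The heart of the argument is to show that $S := H \cup \{0\}$ is a subfield of $\F_{q^n}$. Multiplicative closure and inverses follow immediately from the group structure of $H$, so the key step is additive closure. For this I would use the fact that for any $\alpha \in \F_{q^n}^\ast$ the map $v\mapsto \alpha v$ is an $\F_q$-linear injection of $V$ into $\F_{q^n}$, so $\alpha V$ is again $k$-dimensional; hence the a priori containment $\alpha V \subseteq V$ forces $\alpha V = V$. Now if $\alpha,\beta\in H$, then for every $v\in V$ we have $(\alpha+\beta)v = \alpha v + \beta v \in V$, so $(\alpha+\beta)V\subseteq V$; if $\alpha+\beta\neq 0$, the dimension argument yields $(\alpha+\beta)V = V$ and hence $\alpha+\beta\in H$, while if $\alpha+\beta = 0$ the sum lies in $S$ trivially. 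Since $1\in H$ and $\F_q^\ast\subseteq H$ (because $V$ is an $\F_q$-subspace), $S$ is a subfield of $\F_{q^n}$ containing $\F_q$.

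To finish, I would invoke the standard classification of subfields of $\F_{q^n}$: every subfield containing $\F_q$ is of the form $\F_{q^t}$ with $t\mid n$. Hence $|S|=q^t$, so $|H|=q^t-1$, and substituting into the orbit-stabilizer equation gives the claimed formula. The main obstacle, though modest, is the additive-closure verification for $S$; the key trick there is the dimension-preservation of the multiplication map, which upgrades the obvious inclusion $(\alpha+\beta)V\subseteq V$ into equality.
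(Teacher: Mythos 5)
Your proposal is correct and follows essentially the same route as the paper's own argument (which is given explicitly only for the more general $m$-quasi-orbit version in Lemma \ref{lema9} and specializes to $m=1$ here): both reduce the claim to showing that the stabilizer of $V$ in $\F_{q^n}^\ast$, together with $0$, is a subfield of $\F_{q^n}$ containing $\F_q$, with additive closure coming from $V+V=V$, and then count via orbit--stabilizer (the paper does the equivalent coset count by hand). Your explicit dimension argument upgrading $(\alpha+\beta)V\subseteq V$ to $(\alpha+\beta)V=V$ is a detail the paper glosses over, but the underlying idea is identical.
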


As an immediate consequence of the previous Lemma we have that the maximum size of an orbit is reached when $t = 1$. This justifies the following definition:

\begin{definition}
We say that $V\in G_q(n,k)$ has a \texttt{full length orbit} if
\[|\{\alpha V\mid \alpha\in \F_{q^n}^\ast\}| = \tfrac{q^n-1}{q-1}.\]
If $V$ does not have a full length orbit, then we say that it has a \texttt{degenerate orbit}. 
\end{definition}

It is clear that the set $\alpha V$ is again a subspace with the same dimension as $V$. If for $0\neq \alpha, \beta\in \F_{q^n}$ holds that $\alpha V\neq \beta V$, then we say that these cyclic shifts are \texttt{distinct}.

\begin{remark}
In the code of example \ref{example3} the first 17 orbits are full length orbits and the remaining 3 are orbits of 85 subspaces.
\end{remark}

\section{Classification of some Orbits in $\F_q^n$}

The following tables have been calculated using GAP System for Computational Discrete Algebra and some algorithms programmed in Java and C++ languages.

\begin{enumerate}
\item[$(\mathrm{I})$] $n=6$

\begin{enumerate}[$(1)$]
	\item Number of orbits.
	\begin{center}
		\begin{tabular}{|c|c|c|c|}
			\hline 
			$k\setminus d$ & 2 & 4 & 6\\ 
			\hline 
			1 & 1 & 0 & 0 \\ 
			\hline 
			2 & 10 & 1 & 0\\ 
			\hline 
			3 & 14 & 8 &  1\\ 
			\hline 
		\end{tabular}
	\end{center}
	\vskip0.5cm
	
	\item Number of full length orbits.
	\begin{center}
		\begin{tabular}{|c|c|c|c|}
			\hline 
			$k\setminus d$ & 2 & 4 & 6\\ 
			\hline 
			1 & 1 & 0 & 0 \\ 
			\hline 
			2 & 10 & 0 & 0\\ 
			\hline 
			3 & 14 & 8 &  0\\ 
			\hline 
		\end{tabular}
	\end{center}
	\vskip0.5cm
	
	\item Number of degenerate orbits.
	\begin{enumerate}[$(a)$]
		\item Number of orbits with 21 subspaces
	\begin{center}
		\begin{tabular}{|c|c|c|c|}
			\hline 
			$k\setminus d$ & 2 & 4 & 6\\ 
			\hline 
			1 & 0 & 0 & 0 \\ 
			\hline 
			2 & 0 & 1 & 0\\ 
			\hline 
			3 & 0 & 0 &  0\\ 
			\hline 
		\end{tabular}
	\end{center}
	\vskip0.5cm
	
	\item Number of orbits with 9 subspaces: There exist just one, the Spread code $ \{\alpha\F_{2^3}\mid \alpha\in \F_{2^6}^\ast\}$.
\end{enumerate}
\end{enumerate}

\item[$(\mathrm{II})$] $n=7$

\begin{enumerate}[$(1)$]
	\item Number of orbits.
	\begin{center}
		\begin{tabular}{|c|c|c|}
			\hline 
			$k\setminus d$ & 2 & 4 \\ 
			\hline 
			1 & 1 & 0 \\ 
			\hline 
			2 & 21 & 0 \\ 
			\hline 
			3 & 21 & 72 \\ 
			\hline 
		\end{tabular}
	\end{center}
\end{enumerate}

\item[$(\mathrm{III})$] $n=8$

\begin{enumerate}[$(1)$]
\item Number of orbits.
\begin{center}
\begin{tabular}{|c|c|c|c|c|c|}
\hline 
$k\setminus d$ & 2 & 4 & 6 & 8  \\ 
\hline 
1 & 1 & 0 & 0 & 0  \\ 
\hline 
2 & 42 & 1 & 0 & 0  \\ 
\hline 
3 & 61 & 320 & 0 & 0  \\ 
\hline 
4 & 40 & 750 & 0 & 1  \\ 
\hline 
\end{tabular}
\end{center}
\vskip0.5cm

\item Number of full length orbits.
\begin{center}
\begin{tabular}{|c|c|c|c|c|c|}
\hline 
$k\setminus d$ & 2 & 4 & 6 & 8  \\ 
\hline 
1 & 1 & 0 & 0 & 0  \\ 
\hline 
2 & 42 & 0 & 0 & 0  \\ 
\hline 
3 & 61 & 320 & 0 & 0  \\ 
\hline 
4 & 40 & 746 & 0 & 0  \\ 
\hline 
\end{tabular}
\end{center}
\vskip0.5cm

\item Number of degenerate orbits.
\begin{enumerate}[$(a)$]
	\item Number of orbits with 85 subspaces
	
	\begin{center}
		\begin{tabular}{|c|c|c|c|c|c|}
			\hline 
			$k\setminus d$ & 2 & 4 & 6 & 8  \\ 
			\hline 
			1 & 0 & 0 & 0 & 0  \\ 
			\hline 
			2 & 0 & 1 & 0 & 0  \\ 
			\hline 
			3 & 0 & 0 & 0 & 0  \\ 
			\hline 
			4 & 0 & 4 & 0 & 0  \\ 
			\hline 
		\end{tabular}
	\end{center}
	\vskip0.5cm
	\item Number of orbits with 17 subspaces: There exist just one, the Spread code $ \{\alpha\F_{2^4}\mid \alpha\in \F_{2^8}^\ast\}$.
\end{enumerate}
\end{enumerate}

\item[$(\mathrm{IV})$] $n=9$
\begin{enumerate}[$(1)$]
	\item Number of orbits
	\begin{center}
		\begin{tabular}{|c|c|c|c|}
			\hline 
			$k\setminus d$ & 2 & 4 & 6\\ 
			\hline 
			1 & 1 & 0 & 0 \\ 
			\hline 
			2 & 85 & 0 & 0\\ 
			\hline 
			3 & 84 & 1458 & 1\\ 
			\hline 
			4 & 93 & 5736 & 648 \\ 
			\hline 
		\end{tabular}
	\end{center}
	\vskip0.5cm
	
	\item Number of full length orbits
	\begin{center}
		\begin{tabular}{|c|c|c|c|}
			\hline 
			$k\setminus d$ & 2 & 4 & 6\\ 
			\hline 
			1 & 1 & 0 & 0 \\ 
			\hline 
			2 & 85 & 0 & 0\\ 
			\hline 
			3 & 84 & 1458 & 0\\ 
			\hline 
			4 & 93 & 5736 & 648 \\ 
			\hline 
		\end{tabular}
	\end{center}
	\vskip0.5cm
	
	\item Number of degenerate orbits.
	\begin{enumerate}[$(a)$]
	\item Number of orbits with 73 subspaces
	\begin{center}
		\begin{tabular}{|c|c|c|c|}
			\hline 
			$k\setminus d$ & 2 & 4 & 6\\ 
			\hline 
			1 & 0 & 0 & 0 \\ 
			\hline 
			2 & 0 & 0 & 0\\ 
			\hline 
			3 & 0 & 0 & 1\\ 
			\hline 
			4 & 0 & 0 & 0 \\ 
			\hline 
		\end{tabular}
	\end{center}
\end{enumerate}
\end{enumerate}

\item[$(\mathrm{V})$] $n=10$
\begin{enumerate}[$(1)$]
\item Number of orbits

\begin{center}
\begin{tabular}{|c|c|c|c|c|c|}
\hline 
$k\setminus d$ & 2 & 4 & 6 & 8 & 10 \\ 
\hline 
1 & 1 & 0 & 0 & 0 & 0 \\ 
\hline 
2 & 170 & 1 & 0 & 0 & 0 \\ 
\hline 
3 & 255 & 5950 & 0 & 0 & 0 \\ 
\hline 
4 & 166 & 31487 & 20894 & 0 & 0 \\ 
\hline 
5 & 522 & 41772 & 64472 & 0 & 1 \\ 
\hline 
\end{tabular}
\end{center}
\vskip0.5cm

\item Number of full length orbits

\begin{center}
\begin{tabular}{|c|c|c|c|c|c|}
\hline 
$k\setminus d$ & 2 & 4 & 6 & 8 & 10 \\ 
\hline 
1 & 1 & 0 & 0 & 0 & 0 \\ 
\hline 
2 & 170 & 0 & 0 & 0 & 0 \\ 
\hline 
3 & 255 & 5950 & 0 & 0 & 0 \\ 
\hline 
4 & 166 & 31470 & 20894 & 0 & 0 \\ 
\hline 
5 & 522 & 41772 & 64472 & 0 & 0 \\ 
\hline 
\end{tabular}
\end{center}
\vskip0.5cm

\item Number of degenerate orbits.
\begin{enumerate}[$(a)$]
	\item Number of orbits with 341 subspaces
	
	\begin{center}
		\begin{tabular}{|c|c|c|c|c|c|}
			\hline 
			$k\setminus d$ & 2 & 4 & 6 & 8 & 10 \\ 
			\hline 
			1 & 0 & 0 & 0 & 0 & 0 \\ 
			\hline 
			2 & 0 & 1 & 0 & 0 & 0 \\ 
			\hline 
			3 & 0 & 0 & 0 & 0 & 0 \\ 
			\hline 
			4 & 0 & 17 & 0 & 0 & 0 \\ 
			\hline 
			5 & 0 & 0 & 0 & 0 & 0 \\ 
			\hline 
		\end{tabular}
	\end{center}
	\vskip0.5cm
	\item Number of orbits with 33 subspaces: There exist just one, the Spread code $ \{\alpha\F_{2^5}\mid \alpha\in \F_{2^{10}}^\ast\}$.
\end{enumerate}
\end{enumerate}
\end{enumerate}

\begin{remark}
The conjecture 1 can be refuted by the result in second table given that does not exist a full length orbit in $\F_2^{10}$ with dimension $5$ and minimum distance $8$. (A counterexample to the conjecture was initially given by H. Gluesing et al. in \cite{Gluesing}).
\end{remark}

\section{Duality}

We consider now the usual inner product $(\cdot,\cdot)$ defined on $\F_q^n$, that is, for $x=(x_1,\ldots,x_n)$, $y=(y_1,\ldots, y_n)$ in $\F_q^n$
\[(x,y) = \sum_{j=0}^n x_jy_j.\]
If $U\in \P_q(n)$ and  $\dim U=k$, then the \texttt{orthogonal complement} of $U$ is the $(n-k)$-dimensional subspace of $\F_q^n$ defined as follows:
\[U^\perp = \{x\in \F_q^n\mid  (u,x) = 0, \forall u\in U\}.\]

\begin{definition}
Let $\codeC$ be a subspaces code. The \texttt{dual code} of $\codeC$, noted by $\codeC^\perp$, is defined by
\[\codeC^\perp  = \{V^\perp \mid V\in \codeC\} \subseteq \P_q(n).\]
\end{definition}
 
It is known that the distance between the subspaces $U$ and $V$ is reflected to the distance between the orthogonal subspaces $U^\perp$ and $V^\perp$.

\begin{lemma}[Duality]\cite[Lemma 13]{Etzion2}\label{dualidad}
	\label{dualidad2}
Let $\codeC$ be a subspaces code of type $[n,k,|\codeC|,d]$. Then the dual code $\codeC^\perp$ is a code of type $[n,n-k,|\codeC|,d]$.
\end{lemma}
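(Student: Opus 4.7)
The plan is to verify each of the four parameters $[n, n-k, |\codeC^\perp|, d]$ in turn for $\codeC^\perp$. The ambient length is unchanged because we still work inside $\F_q^n$. For the dimension claim, I would quote the standard fact that the inner product in the statement is non-degenerate on $\F_q^n$, so $\dim V + \dim V^\perp = n$ for every subspace $V$. Since every codeword of $\codeC$ has dimension $k$, every codeword of $\codeC^\perp$ has dimension $n-k$.

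For the size, I would observe that $V \mapsto V^\perp$ is an involution on $\P_q(n)$ (i.e., $(V^\perp)^\perp = V$), hence injective. Its restriction to $\codeC$ is therefore a bijection onto $\codeC^\perp$, which gives $|\codeC^\perp| = |\codeC|$.

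The real content is the invariance of the minimum distance. The key intermediate step is to show that $d(V^\perp, W^\perp) = d(V, W)$ for every pair $V, W$ of subspaces. For this I would invoke the two classical duality identities
\[(V + W)^\perp = V^\perp \cap W^\perp, \qquad (V \cap W)^\perp = V^\perp + W^\perp.\]
Combined with $\dim U + \dim U^\perp = n$, these yield
\[\dim(V^\perp \cap W^\perp) = n - \dim(V+W), \quad \dim(V^\perp + W^\perp) = n - \dim(V \cap W).\]
Substituting into the definition of the subspace distance,
\[d(V^\perp, W^\perp) = \dim(V^\perp + W^\perp) - \dim(V^\perp \cap W^\perp) = \dim(V+W) - \dim(V\cap W) = d(V,W).\]
Taking the minimum over all distinct pairs in $\codeC$ (equivalently, over all distinct pairs in $\codeC^\perp$, since $V \mapsto V^\perp$ is a bijection) gives $d(\codeC^\perp) = d(\codeC) = d$.

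The only mildly delicate point is justifying the two $\perp$-identities above. The inclusions $V^\perp \cap W^\perp \subseteq (V+W)^\perp$ and $V^\perp + W^\perp \subseteq (V\cap W)^\perp$ are immediate from the definition of orthogonality; I would upgrade the first to equality by a direct check (any vector orthogonal to all of $V$ and all of $W$ is orthogonal to every $v+w$), and then deduce the second by applying $\perp$ to the first and using $(U^\perp)^\perp = U$. This is the step where one must be careful about non-degeneracy, but in our setting it is automatic.
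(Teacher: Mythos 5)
The paper does not prove this lemma at all; it is quoted verbatim from Etzion--Vardy (their Lemma 13), so there is no in-paper argument to compare against. Your proof is correct and is exactly the standard one used in that reference: non-degeneracy of the form gives $\dim V+\dim V^\perp=n$, the involution $V\mapsto V^\perp$ gives $|\codeC^\perp|=|\codeC|$, and the identities $(V+W)^\perp=V^\perp\cap W^\perp$ and $(V\cap W)^\perp=V^\perp+W^\perp$ give $d(V^\perp,W^\perp)=d(V,W)$, hence preservation of the minimum distance. Your care about non-degeneracy is exactly right --- the form can be isotropic over $\F_q$, so $V\cap V^\perp$ need not be trivial, but only the dimension formula is needed and that survives.
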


\begin{remark}
In classical coding theory is verified that, if $C$ is a $[n,k]$ cyclic code over $\F_q$, then so is its dual. As we can see below, this  is not true in context of subspaces codes. Let $\gamma$ be a primitive root of $x^5+x^2+1$ and use this polynomial to generate the field $\F_{2^5}$. The following table present a binary cyclic subspace code $\codeC$ with parameters $[5,2,31,2]$ and its corresponding dual code $\codeC^\perp$, which is not cyclic.
\end{remark}

\begin{center}
	\begin{longtable}{l||l|}
		\ \ \ \ \ \ \ \ $\codeC$ & \ \ \ \ \ \ \ \ \ \ \ \ \ \ \ $\codeC^\perp$\\
		\hline
		&\\
		$\{ \gamma^{0}, \gamma^{13}, \gamma^{14}\}$ & $\{\gamma^{1}, \gamma^{7}, \gamma^{9}, \gamma^{12}, \gamma^{20}, \gamma^{21}, \gamma^{28}\}$\\
		$\{ \gamma^{1}, \gamma^{14}, \gamma^{15}\}$ & $\{ \gamma^{5}, \gamma^{7}, \gamma^{10}, \gamma^{14}, \gamma^{20}, \gamma^{21}, \gamma^{29}\}$\\
		$\{ \gamma^{2}, \gamma^{15}, \gamma^{16}\}$ & $\{ \gamma^{6}, \gamma^{10}, \gamma^{16}, \gamma^{18}, \gamma^{21}, \gamma^{29}, \gamma^{30}\}$\\
		$\{ \gamma^{3}, \gamma^{16}, \gamma^{17}\}$ & $\{ \gamma^{2}, \gamma^{10}, \gamma^{11}, \gamma^{18}, \gamma^{22}, \gamma^{28}, \gamma^{30}\}$\\
		$\{ \gamma^{4}, \gamma^{17}, \gamma^{18}\}$ & $\{\gamma^{2}, \gamma^{3}, \gamma^{11}, \gamma^{18}, \gamma^{20}, \gamma^{23}, \gamma^{27}\}$\\
		$\{ \gamma^{5}, \gamma^{18}, \gamma^{19}\}$ & $\{\gamma^{3}, \gamma^{4}, \gamma^{11}, \gamma^{15}, \gamma^{21}, \gamma^{23}, \gamma^{26}\}$\\
		$\{ \gamma^{6}, \gamma^{19}, \gamma^{20}\}$ & $\{\gamma^{0}, \gamma^{4}, \gamma^{10}, \gamma^{12}, \gamma^{15}, \gamma^{23}, \gamma^{24}\}$\\
		$\{ \gamma^{7}, \gamma^{20}, \gamma^{21}\}$ & $\{\gamma^{0}, \gamma^{1}, \gamma^{13}, \gamma^{14}, \gamma^{15}, \gamma^{18}, \gamma^{24}\}$\\
		$\{ \gamma^{8}, \gamma^{21}, \gamma^{22}\}$ & $\{ \gamma^{1}, \gamma^{5}, \gamma^{11}, \gamma^{13}, \gamma^{16}, \gamma^{24}, \gamma^{25}\}$\\
		$\{ \gamma^{9}, \gamma^{22}, \gamma^{23}\}$ & $\{ \gamma^{5}, \gamma^{6}, \gamma^{13}, \gamma^{17}, \gamma^{23}, \gamma^{25}, \gamma^{28}\}$\\
		$\{ \gamma^{10}, \gamma^{23}, \gamma^{24}\}$ & $\{ \gamma^{6}, \gamma^{15}, \gamma^{17}, \gamma^{19}, \gamma^{20}, \gamma^{22}, \gamma^{25}\}$\\
		$\{ \gamma^{11}, \gamma^{24}, \gamma^{25}\}$ & $\{\gamma^{8}, \gamma^{17}, \gamma^{19}, \gamma^{21}, \gamma^{22}, \gamma^{24}, \gamma^{27}\}$\\
		$\{ \gamma^{12}, \gamma^{25}, \gamma^{26}\}$ & $\{ \gamma^{8}, \gamma^{9}, \gamma^{10}, \gamma^{13}, \gamma^{19}, \gamma^{26}, \gamma^{27}\}$\\
		$\{ \gamma^{13}, \gamma^{26}, \gamma^{27}\}$ & $\{\gamma^{7}, \gamma^{8}, \gamma^{9}, \gamma^{12}, \gamma^{18}, \gamma^{25}, \gamma^{26}\}$\\
		$\{ \gamma^{14}, \gamma^{27}, \gamma^{28}\}$ & $\{\gamma^{7}, \gamma^{9}, \gamma^{11}, \gamma^{12}, \gamma^{14}, \gamma^{17}, \gamma^{29}\}$\\
		$\{ \gamma^{15}, \gamma^{28}, \gamma^{29}\}$ & $\{\gamma^{7}, \gamma^{14}, \gamma^{16}, \gamma^{19}, \gamma^{23}, \gamma^{29}, \gamma^{30}\}$\\
		$\{ \gamma^{16}, \gamma^{29}, \gamma^{30}\}$ & $\{\gamma^{2}, \gamma^{8}, \gamma^{15}, \gamma^{16}, \gamma^{28}, \gamma^{29}, \gamma^{30}\}$\\
		$\{ \gamma^{0}, \gamma^{17}, \gamma^{30}\}$ & $\{\gamma^{2}, \gamma^{3}, \gamma^{9}, \gamma^{20}, \gamma^{24}, \gamma^{28}, \gamma^{30}\}$\\
		$\{ \gamma^{0}, \gamma^{1}, \gamma^{18}\}$ & $\{\gamma^{2}, \gamma^{3}, \gamma^{4}, \gamma^{7}, \gamma^{13}, \gamma^{20}, \gamma^{21}\}$\\
		$\{ \gamma^{1}, \gamma^{2}, \gamma^{19}\}$ & $\{\gamma^{0}, \gamma^{3}, \gamma^{4}, \gamma^{10}, \gamma^{21}, \gamma^{25}, \gamma^{29}\}$\\
		$\{ \gamma^{2}, \gamma^{3}, \gamma^{20}\}$ & $\{\gamma^{0}, \gamma^{1}, \gamma^{4}, \gamma^{10}, \gamma^{17}, \gamma^{18}, \gamma^{30}\}$\\
		$\{ \gamma^{3}, \gamma^{4}, \gamma^{21}\}$ & $\{\gamma^{0}, \gamma^{1}, \gamma^{2}, \gamma^{5}, \gamma^{11}, \gamma^{18}, \gamma^{19}\}$\\
		$\{ \gamma^{4}, \gamma^{5}, \gamma^{22}\}$ & $\{ \gamma^{1}, \gamma^{3}, \gamma^{5}, \gamma^{6}, \gamma^{8}, \gamma^{11}, \gamma^{23}\}$\\
		$\{ \gamma^{5}, \gamma^{6}, \gamma^{23}\}$ & $\{\gamma^{4}, \gamma^{5}, \gamma^{6}, \gamma^{9}, \gamma^{15}, \gamma^{22}, \gamma^{23}\}$\\
		$\{ \gamma^{6}, \gamma^{7}, \gamma^{24}\}$ & $\{\gamma^{0}, \gamma^{6}, \gamma^{7}, \gamma^{15}, \gamma^{22}, \gamma^{24}, \gamma^{27}\}$\\
		$\{ \gamma^{7}, \gamma^{8}, \gamma^{25}\}$ & $\{\gamma^{1}, \gamma^{13}, \gamma^{22}, \gamma^{24}, \gamma^{26}, \gamma^{27}, \gamma^{29}\}$\\
		$\{ \gamma^{8}, \gamma^{9}, \gamma^{26}\}$ & $\{ \gamma^{5}, \gamma^{12}, \gamma^{13}, \gamma^{25}, \gamma^{26}, \gamma^{27}, \gamma^{30}\}$\\
		$\{ \gamma^{9}, \gamma^{10}, \gamma^{27}\}$ & $\{\gamma^{2}, \gamma^{6}, \gamma^{12}, \gamma^{14}, \gamma^{17}, \gamma^{25}, \gamma^{26}\}$\\
		$\{ \gamma^{10}, \gamma^{11}, \gamma^{28}\}$ & $\{\gamma^{3}, \gamma^{12}, \gamma^{14}, \gamma^{16}, \gamma^{17}, \gamma^{19}, \gamma^{22}\}$\\ 
		$\{ \gamma^{11}, \gamma^{12}, \gamma^{29}\}$ & $\{\gamma^{4}, \gamma^{8}, \gamma^{14}, \gamma^{16}, \gamma^{19}, \gamma^{27}, \gamma^{28}\}$\\
		$\{ \gamma^{12}, \gamma^{13}, \gamma^{30}\}$& $\{\gamma^{0}, \gamma^{8}, \gamma^{9}, \gamma^{16}, \gamma^{20}, \gamma^{26}, \gamma^{28}\}$\\
	\end{longtable}
\end{center}

\begin{example}
Let $\gamma$ be a primitive root of $x^6+x^4+x^3+x+1$ and use this polynomial to generate the field $\F_{2^6}$. The following table present the spread Code $\codeC = \{\alpha\F_{2^3}\mid \alpha\in \F_{2^6}^\ast\}$ and its corresponding dual code $\codeC^\perp$.

\begin{longtable}{l||l}
\hline
\ \ \ \ \ \ \ \ \ \ \ \ \ \ \ $\codeC$ & \ \ \ \ \ \ \ \ \ \ \ \ \ \ \ $\codeC^\perp$ \\ 
\hline
$\{\gamma^{0}, \gamma^{9}, \gamma^{18}, \gamma^{27}, \gamma^{36}, \gamma^{45}, \gamma^{54} \}$ & $\{\gamma^{3}, \gamma^{17}, \gamma^{19}, \gamma^{22}, \gamma^{32}, \gamma^{47}, \gamma^{51}\}$ \\
$\{\gamma^{1}, \gamma^{10}, \gamma^{19}, \gamma^{28}, \gamma^{37}, \gamma^{46}, \gamma^{55}\}$	& $\{\gamma^{12}, \gamma^{23}, \gamma^{27}, \gamma^{34}, \gamma^{35}, \gamma^{46}, \gamma^{58}\}$\\
$\{\gamma^{2}, \gamma^{11}, \gamma^{20}, \gamma^{29}, \gamma^{38}, \gamma^{47}, \gamma^{56}\}$ & $\{\gamma^{16}, \gamma^{45}, \gamma^{52}, \gamma^{53}, \gamma^{59}, \gamma^{60}, \gamma^{61}\}$\\
$\{\gamma^{3}, \gamma^{12}, \gamma^{21}, \gamma^{30}, \gamma^{39}, \gamma^{48}, \gamma^{57}\}$ & $\{\gamma^{4}, \gamma^{8}, \gamma^{10}, \gamma^{30}, \gamma^{39}, \gamma^{54}, \gamma^{57}\}$\\
$\{\gamma^{4}, \gamma^{13}, \gamma^{22}, \gamma^{31}, \gamma^{40}, \gamma^{49}, \gamma^{58}\}$ & $\{\gamma^{1}, \gamma^{5}, \gamma^{25}, \gamma^{36}, \gamma^{42}, \gamma^{48}, \gamma^{62}\}$\\
$\{\gamma^{5}, \gamma^{14}, \gamma^{23}, \gamma^{32}, \gamma^{41}, \gamma^{50}, \gamma^{59}\}$ & $\{\gamma^{0}, \gamma^{2}, \gamma^{6}, \gamma^{26}, \gamma^{37}, \gamma^{43}, \gamma^{49}\}$\\
$\{\gamma^{6}, \gamma^{15}, \gamma^{24}, \gamma^{33}, \gamma^{42}, \gamma^{51}, \gamma^{60}\}$ & $\{\gamma^{7}, \gamma^{9}, \gamma^{13}, \gamma^{33}, \gamma^{44}, \gamma^{50}, \gamma^{56}\}$\\
$\{\gamma^{7}, \gamma^{16}, \gamma^{25}, \gamma^{34}, \gamma^{43}, \gamma^{52}, \gamma^{61}\}$ & $\{\gamma^{11}, \gamma^{14}, \gamma^{20}, \gamma^{24}, \gamma^{38}, \gamma^{40}, \gamma^{55}\}$\\
$\{\gamma^{8}, \gamma^{17}, \gamma^{26}, \gamma^{35}, \gamma^{44}, \gamma^{53}, \gamma^{62}\}$ & $\{\gamma^{15}, \gamma^{18}, \gamma^{21}, \gamma^{28}, \gamma^{29}, \gamma^{31}, \gamma^{41}\}$\\
\end{longtable}

Once more, it can be seen that the dual code $\codeC^\perp$ is non-cyclic.
\end{example}

\section{Quasi-cyclic subspaces codes}

\begin{definition}
Let $\gamma$ be a primitive element of $\F_{q^n}$ and $m$ a natural number with $m\mid (q^n-1)$. 
A subspace code $\codeC\subseteq \P_q(n)$ is called $m$-\texttt{quasi cyclic}, if holds the following property: 
\[\{0,\gamma^{i_1},\gamma^{i_2},\ldots, \gamma^{i_s}\}\in \codeC \Rightarrow \{0,\gamma^{i_1+m},\gamma^{i_2+m}, \ldots, \gamma^{i_s+m}\}\in \codeC.\]
(Assuming that $s=q^k-1$, with $k$ the dimension of the codeword).
\end{definition}

Now we present a natural generalization for the definition of orbit of a subspace and for the length of an orbit.

\begin{definition}
Let $\alpha\in \F_{q^n}^\ast$, $m$ a natural number with $m\mid q^n-1$ and $V\in G_q(n,k)$. 
The \texttt{$m$-quasi cyclic shift} or the \texttt{$m$-quasi orbit} of a subspace $V$ is defined by
\[\alpha^m V := \{\alpha^m v\mid v\in V\}.\] 
Then a subspace code $\codeC\subseteq G_q(n,k)$ is called $m$-\texttt{quasi cyclic}, if for all $0\neq \alpha\in \F_{q^n}$ and all subspace 
$V\in \codeC$ we have $\alpha^m V\in \codeC$.
\end{definition}

It is clear that the set $\alpha^m V$ is again a subspace with the same dimension as $V$.  

The demonstrations idea of the following lemma is the same as the presented by T. Etzion at al. in \cite[Lemma 9]{Etzion1} for the case $m=1$. 
This is obtained only by performing basic modifications to the cited one.

\begin{lemma}\label{lema9}
If $m$ a natural number with $m\mid q^n-1$ and $V\in G_q(n,k)$, then 
\[|\{\alpha^m V\mid \alpha\in \F_{q^n}^\ast\}| = \frac{1}{m}\big(\frac{q^n-1}{q^t-1}\big), \]
for some natural number $t$, which divides $n$.
\end{lemma}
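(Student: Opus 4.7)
The plan is to follow the proof of Lemma 9 in \cite{Etzion1} for the case $m=1$ by realizing the $m$-quasi orbit of $V$ as the orbit under a suitable subgroup of $\F_{q^n}^\ast$. Set $H := \{\alpha^m : \alpha \in \F_{q^n}^\ast\}$. Since $\F_{q^n}^\ast$ is cyclic of order $q^n-1$ and $m \mid q^n-1$, $H$ is a subgroup of $\F_{q^n}^\ast$ of order $(q^n-1)/m$, and the $m$-quasi orbit of $V$ coincides with the $H$-orbit of $V$ under the natural multiplicative action on $G_q(n,k)$. By the orbit--stabilizer theorem its cardinality equals $|H|/|\mathrm{Stab}_H(V)|$, so the task reduces to identifying the $H$-stabilizer of $V$.

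Next I would describe the full stabilizer $\mathrm{Stab}_{\F_{q^n}^\ast}(V) := \{\alpha \in \F_{q^n}^\ast : \alpha V = V\}$. Following the argument of Etzion et al., the set $\mathrm{Stab}_{\F_{q^n}^\ast}(V) \cup \{0\}$ is a subfield of $\F_{q^n}$: it contains $0$ and $1$, it is closed under multiplication and inversion, and for $\alpha, \beta$ in the stabilizer one has $(\alpha+\beta)V \subseteq V$, with equality whenever $\alpha+\beta \neq 0$ by an $\F_q$-dimension count (multiplication by $\alpha+\beta$ being an injective $\F_q$-linear endomorphism of $V$). Consequently $\mathrm{Stab}_{\F_{q^n}^\ast}(V) = \F_{q^t}^\ast$ for some $t \mid n$, and so $\mathrm{Stab}_H(V) = H \cap \F_{q^t}^\ast$.

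The remaining step, which I expect to be the main technical obstacle, is to show that $|H \cap \F_{q^t}^\ast| = q^t - 1$, i.e., that $\F_{q^t}^\ast \subseteq H$. Fixing a primitive element $\gamma$ of $\F_{q^n}$, one has $H = \langle \gamma^m \rangle$ and $\F_{q^t}^\ast = \langle \gamma^{(q^n-1)/(q^t-1)} \rangle$, so the inclusion reduces to the divisibility $m \mid (q^n-1)/(q^t-1)$, which must be verified using the hypothesis $m \mid q^n-1$ together with the specific $t$ coming from the stabilizer. Once this intersection count is established, orbit--stabilizer yields
\[
|\{\alpha^m V : \alpha \in \F_{q^n}^\ast\}| \;=\; \frac{|H|}{|\F_{q^t}^\ast|} \;=\; \frac{(q^n-1)/m}{q^t-1} \;=\; \frac{1}{m}\Big(\frac{q^n-1}{q^t-1}\Big),
\]
which is the claimed identity. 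All steps other than the divisibility check are straightforward bookkeeping transferred from the $m=1$ proof.
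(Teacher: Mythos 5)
Your steps (1)--(3) are sound and run parallel to the paper's argument: the paper also works with the subgroup $H=\langle\gamma^m\rangle$ and with the smallest $l$ such that $\gamma^{lm}V=V$, so that $A_0=\langle\gamma^{lm}\rangle$ is precisely $\mathrm{Stab}_H(V)$ and the orbit length is its index $l$ in $H$; this is your orbit--stabilizer reduction in different clothing. The proposal breaks down at exactly the step you flag as the main obstacle. The divisibility $m\mid (q^n-1)/(q^t-1)$, with $t$ the exponent attached to the \emph{full} stabilizer, is false in general, so the inclusion $\F_{q^t}^\ast\subseteq H$ cannot be established. Concretely, take $q=2$, $n=4$, $V=\F_4\in G_2(4,2)$ and $m=3$: then $\mathrm{Stab}_{\F_{16}^\ast}(V)=\F_4^\ast$, so $t=2$ and $(q^n-1)/(q^t-1)=5$, while $m=3$ divides $15$ but not $5$. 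Here $H=\langle\gamma^3\rangle$ has order $5$, $H\cap\F_4^\ast=\{1\}$, and the quasi-orbit has length $5=\tfrac{1}{3}\cdot\tfrac{15}{2^1-1}$: the formula holds, but with exponent $1$, not with the $t=2$ coming from the full stabilizer. So the genuinely remaining task is not to show $\F_{q^t}^\ast\subseteq H$, but to show that $H\cap\F_{q^t}^\ast$ together with $0$ is itself a subfield of the form $\F_{q^{t'}}$ for some $t'\mid n$, possibly with $t'<t$; your outline contains no argument for that, and the exponent in the final display must be this $t'$.

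That harder statement is what the paper attacks directly: it never passes through the full stabilizer, but argues that $A_0\cup\{0\}$ is closed under multiplication and addition and hence is the multiplicative group of a subfield. You should be aware that this closure-under-addition step is itself the delicate point: for $\alpha,\beta\in A_0$ one gets $(\alpha+\beta)V\subseteq V$, which only places $\alpha+\beta$ in the full stabilizer, and there is no a priori reason why it should lie in $H$ again. Indeed the desired conclusion can fail: for $V=\F_{2^6}\subset\F_{2^{12}}$ and $m=7$ one finds $\mathrm{Stab}_H(V)=H\cap\F_{64}^\ast=\mu_9$, the group of ninth roots of unity, and $\mu_9\cup\{0\}$ has ten elements, so it is not a field; the quasi-orbit has length $585/9=65$, which is not of the form $\tfrac{1}{7}\bigl(\tfrac{4095}{2^{t'}-1}\bigr)$ for any $t'\mid 12$. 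So the gap in your proposal is not merely technical bookkeeping left to the reader: the step you defer is false as you state it, and any repair must confront the structure of $H\cap\F_{q^t}^\ast$ rather than the full stabilizer.
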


\begin{proof}
Let $\gamma$ be a primitive element in $\F_{q^n}$, that is, $\F_{q^n}^\ast =\langle \gamma\rangle$ and let $l$ the smallest natural number with $\gamma^{lm}V=V$.
It is clear that $lm\mid q^n-1$. Let now $0\leq s < l$ and $i\in \N$, then
\begin{align*}
\gamma^{iml+s}V & = \gamma^s(\gamma^{iml}V)\\
&= \gamma^s(\gamma^{ml}\cdots \gamma^{ml})V\\
&= \gamma^sV.
\end{align*}
That is, for each natural number $i$ and for each $0\leq s < l$ is verified that $\gamma^sV= \gamma^{iml+s}V$.
Additionally, for every $0\leq s_1, s_2< l$ the sets 
\[A_{s_j} := \{\gamma^{iml+s_j}\mid i\in \N\}\]
satisfy that $|A_{s_1}| = |A_{s_2}|$. In fact, given that $q^n-1 = wml$, for some $w\in \N$, then we have
\[A_{s_j} = \{\gamma^{s_j}, \gamma^{ml+s_j},\ldots, \gamma^{ml(w-1)+s_j}\}.\]
Therefore $|A_{s_1}| = |A_{s_2}|=w$. Let $\gamma^{iml}, \gamma^{rml}\in A_0$, for some $i, r\in \N$. Since $A_0 = \{\gamma^{iml}\mid i\in \N\}$, it follows that
\[(\gamma^{iml} + \gamma^{rml})V \subseteq \gamma^{iml}V + \gamma^{rml}V = V+V = V,\]
and therefore $\gamma^{iml} + \gamma^{rml}\in A_0$. It is clear that $A_0$ is closed under multiplication, then we have that $\langle \gamma^{ml}\rangle$ is the multiplicative
group of a subfield of $\F_{q^n}$, say $\F_{q^t}$, for some natural number $t$, which divides $n$. Then
\[|\{\alpha^m V\mid \alpha\in \F_{q^n}^\ast\}| = l= \tfrac{q^n-1}{mw} =\tfrac{1}{m}\big(\tfrac{q^n-1}{q^t-1}\big),\]
which proves affirmation.
\end{proof}

An immediate consequence of Lemma \ref{lema9} is that the largest possible size of an $m$-quasi orbit is $\frac{1}{m}\big(\frac{q^n-1}{q-1}\big)$. This justifies the following definition:

\begin{definition}
We say that the subspace $V\in G_q(n,k)$ has a \texttt{full length $m$-quasi orbit}, if
\[|\{\alpha^m V\mid \alpha\in \F_{q^n}^\ast\}| = \tfrac{1}{m}\big(\tfrac{q^n-1}{q-1}\big).\]
In other case we say that it has a \texttt{degenerate $m$-quasi orbit}. 
\end{definition}

\begin{example}
A 3-quasi cyclic orbit with parameters $[8,4,2992,4]$. 

Let $\gamma$ be a primitive root of $x^8+x^4+x^3+x^2+1$ and use this polynomial to generate the field $\F_{2^{8}}$. 
Let $\codeC$ be the code in $G_2(8,4)$ which consists of all 3-cyclic shifts of
\begin{longtable}{l}
$\{ \gamma^{0}, \gamma^{19}, \gamma^{58}, \gamma^{62}, \gamma^{90}, \gamma^{92}, \gamma^{93}, \gamma^{107}, \gamma^{117}, \gamma^{122}, \gamma^{125}, \gamma^{128}, \gamma^{140}, \gamma^{158}, \gamma^{194}\}$\\
$\{ \gamma^{0}, \gamma^{6}, \gamma^{13}, \gamma^{47}, \gamma^{99}, \gamma^{101}, \gamma^{118}, \gamma^{149}, \gamma^{156}, \gamma^{163}, \gamma^{164}, \gamma^{169}, \gamma^{182}, \gamma^{188}, \gamma^{191}\}$\\
$\{ \gamma^{1}, \gamma^{27}, \gamma^{42}, \gamma^{58}, \gamma^{59}, \gamma^{60}, \gamma^{62}, \gamma^{72}, \gamma^{83}, \gamma^{84}, \gamma^{108}, \gamma^{110}, \gamma^{158}, \gamma^{187}, \gamma^{199}\}$\\
$\{ \gamma^{1}, \gamma^{18}, \gamma^{20}, \gamma^{59}, \gamma^{68}, \gamma^{69}, \gamma^{80}, \gamma^{93}, \gamma^{108}, \gamma^{126}, \gamma^{152}, \gamma^{175}, \gamma^{179}, \gamma^{195}, \gamma^{217}\}$\\
$\{ \gamma^{2}, \gamma^{25}, \gamma^{62}, \gamma^{77}, \gamma^{89}, \gamma^{95}, \gamma^{96}, \gamma^{120}, \gamma^{134}, \gamma^{160}, \gamma^{166}, \gamma^{169}, \gamma^{198}, \gamma^{201}, \gamma^{204}\}$\\
$\{ \gamma^{2}, \gamma^{22}, \gamma^{44}, \gamma^{49}, \gamma^{83}, \gamma^{96}, \gamma^{103}, \gamma^{125}, \gamma^{126}, \gamma^{150}, \gamma^{162}, \gamma^{182}, \gamma^{185}, \gamma^{204}, \gamma^{208}\}$\\
$\{ \gamma^{2}, \gamma^{30}, \gamma^{39}, \gamma^{40}, \gamma^{51}, \gamma^{64}, \gamma^{92}, \gamma^{120}, \gamma^{150}, \gamma^{166}, \gamma^{181}, \gamma^{186}, \gamma^{195}, \gamma^{199}, \gamma^{208}\}$\\
$\{ \gamma^{2}, \gamma^{12}, \gamma^{22}, \gamma^{23}, \gamma^{33}, \gamma^{42}, \gamma^{44}, \gamma^{47}, \gamma^{64}, \gamma^{78}, \gamma^{86}, \gamma^{92}, \gamma^{115}, \gamma^{153}, \gamma^{180}\}$\\
$\{ \gamma^{0}, \gamma^{7}, \gamma^{38}, \gamma^{52}, \gamma^{72}, \gamma^{79}, \gamma^{94}, \gamma^{112}, \gamma^{141}, \gamma^{156}, \gamma^{169}, \gamma^{174}, \gamma^{184}, \gamma^{195}, \gamma^{202}\}$\\
$\{ \gamma^{2}, \gamma^{9}, \gamma^{46}, \gamma^{48}, \gamma^{63}, \gamma^{78}, \gamma^{81}, \gamma^{96}, \gamma^{114}, \gamma^{115}, \gamma^{139}, \gamma^{176}, \gamma^{188}, \gamma^{204}, \gamma^{217}\}$\\
$\{ \gamma^{0}, \gamma^{5}, \gamma^{37}, \gamma^{40}, \gamma^{67}, \gamma^{74}, \gamma^{84}, \gamma^{95}, \gamma^{103}, \gamma^{135}, \gamma^{138}, \gamma^{144}, \gamma^{176}, \gamma^{179}, \gamma^{216}\}$\\
$\{ \gamma^{1}, \gamma^{7}, \gamma^{14}, \gamma^{48}, \gamma^{100}, \gamma^{102}, \gamma^{119}, \gamma^{150}, \gamma^{157}, \gamma^{164}, \gamma^{165}, \gamma^{170}, \gamma^{183}, \gamma^{189}, \gamma^{192}\}$\\
$\{ \gamma^{1}, \gamma^{17}, \gamma^{33}, \gamma^{36}, \gamma^{88}, \gamma^{96}, \gamma^{98}, \gamma^{109}, \gamma^{126}, \gamma^{146}, \gamma^{162}, \gamma^{168}, \gamma^{177}, \gamma^{191}, \gamma^{195}\}$\\
$\{ \gamma^{1}, \gamma^{3}, \gamma^{51}, \gamma^{61}, \gamma^{63}, \gamma^{72}, \gamma^{91}, \gamma^{110}, \gamma^{111}, \gamma^{127}, \gamma^{133}, \gamma^{135}, \gamma^{164}, \gamma^{178}, \gamma^{183}\}$\\
$\{ \gamma^{1}, \gamma^{7}, \gamma^{24}, \gamma^{33}, \gamma^{36}, \gamma^{53}, \gamma^{75}, \gamma^{104}, \gamma^{142}, \gamma^{144}, \gamma^{151}, \gamma^{188}, \gamma^{192}, \gamma^{197}, \gamma^{205}\}$\\
$\{ \gamma^{0}, \gamma^{12}, \gamma^{27}, \gamma^{31}, \gamma^{45}, \gamma^{65}, \gamma^{87}, \gamma^{104}, \gamma^{127}, \gamma^{155}, \gamma^{159}, \gamma^{162}, \gamma^{167}, \gamma^{171}, \gamma^{211}\}$\\
$\{ \gamma^{2}, \gamma^{6}, \gamma^{36}, \gamma^{45}, \gamma^{55}, \gamma^{66}, \gamma^{72}, \gamma^{102}, \gamma^{112}, \gamma^{123}, \gamma^{128}, \gamma^{138}, \gamma^{149}, \gamma^{156}, \gamma^{203}\}$\\
$\{ \gamma^{0}, \gamma^{26}, \gamma^{41}, \gamma^{57}, \gamma^{58}, \gamma^{59}, \gamma^{61}, \gamma^{71}, \gamma^{82}, \gamma^{83}, \gamma^{107}, \gamma^{109}, \gamma^{157}, \gamma^{186}, \gamma^{198}\}$\\
$\{ \gamma^{0}, \gamma^{6}, \gamma^{23}, \gamma^{53}, \gamma^{55}, \gamma^{58}, \gamma^{63}, \gamma^{74}, \gamma^{89}, \gamma^{103}, \gamma^{107}, \gamma^{147}, \gamma^{191}, \gamma^{196}, \gamma^{203}\}$\\
$\{ \gamma^{2}, \gamma^{6}, \gamma^{8}, \gamma^{49}, \gamma^{56}, \gamma^{102}, \gamma^{103}, \gamma^{127}, \gamma^{157}, \gamma^{161}, \gamma^{165}, \gamma^{184}, \gamma^{193}, \gamma^{196}, \gamma^{210}\}$\\
$\{ \gamma^{1}, \gamma^{14}, \gamma^{16}, \gamma^{18}, \gamma^{34}, \gamma^{56}, \gamma^{64}, \gamma^{66}, \gamma^{69}, \gamma^{77}, \gamma^{100}, \gamma^{114}, \gamma^{155}, \gamma^{163}, \gamma^{202}\}$\\
$\{ \gamma^{2}, \gamma^{4}, \gamma^{9}, \gamma^{32}, \gamma^{52}, \gamma^{68}, \gamma^{74}, \gamma^{91}, \gamma^{114}, \gamma^{130}, \gamma^{142}, \gamma^{158}, \gamma^{171}, \gamma^{197}, \gamma^{205}\}$\\
$\{ \gamma^{2}, \gamma^{39}, \gamma^{43}, \gamma^{48}, \gamma^{55}, \gamma^{82}, \gamma^{95}, \gamma^{139}, \gamma^{149}, \gamma^{159}, \gamma^{160}, \gamma^{165}, \gamma^{170}, \gamma^{181}, \gamma^{184}\}$\\
$\{ \gamma^{2}, \gamma^{4}, \gamma^{42}, \gamma^{43}, \gamma^{52}, \gamma^{59}, \gamma^{63}, \gamma^{67}, \gamma^{85}, \gamma^{86}, \gamma^{110}, \gamma^{159}, \gamma^{163}, \gamma^{164}, \gamma^{188}\}$\\
$\{ \gamma^{1}, \gamma^{5}, \gamma^{6}, \gamma^{7}, \gamma^{30}, \gamma^{31}, \gamma^{55}, \gamma^{67}, \gamma^{95}, \gamma^{101}, \gamma^{139}, \gamma^{182}, \gamma^{192}, \gamma^{203}, \gamma^{209}\}$\\
$\{ \gamma^{2}, \gamma^{28}, \gamma^{32}, \gamma^{41}, \gamma^{68}, \gamma^{108}, \gamma^{112}, \gamma^{127}, \gamma^{128}, \gamma^{145}, \gamma^{150}, \gamma^{152}, \gamma^{196}, \gamma^{200}, \gamma^{208}\}$\\
$\{ \gamma^{1}, \gamma^{5}, \gamma^{11}, \gamma^{20}, \gamma^{22}, \gamma^{38}, \gamma^{70}, \gamma^{73}, \gamma^{93}, \gamma^{101}, \gamma^{115}, \gamma^{131}, \gamma^{167}, \gamma^{180}, \gamma^{196}\}$\\
$\{ \gamma^{1}, \gamma^{41}, \gamma^{60}, \gamma^{61}, \gamma^{73}, \gamma^{76}, \gamma^{83}, \gamma^{85}, \gamma^{94}, \gamma^{133}, \gamma^{159}, \gamma^{188}, \gamma^{196}, \gamma^{200}, \gamma^{205}\}$\\
$\{ \gamma^{2}, \gamma^{21}, \gamma^{60}, \gamma^{64}, \gamma^{92}, \gamma^{94}, \gamma^{95}, \gamma^{109}, \gamma^{119}, \gamma^{124}, \gamma^{127}, \gamma^{130}, \gamma^{142}, \gamma^{160}, \gamma^{196}\}$\\
$\{ \gamma^{1}, \gamma^{8}, \gamma^{39}, \gamma^{53}, \gamma^{73}, \gamma^{80}, \gamma^{95}, \gamma^{113}, \gamma^{142}, \gamma^{157}, \gamma^{170}, \gamma^{175}, \gamma^{185}, \gamma^{196}, \gamma^{203}\}$\\
$\{ \gamma^{1}, \gamma^{32}, \gamma^{46}, \gamma^{59}, \gamma^{89}, \gamma^{108}, \gamma^{115}, \gamma^{125}, \gamma^{136}, \gamma^{145}, \gamma^{167}, \gamma^{176}, \gamma^{181}, \gamma^{190}, \gamma^{220}\}$\\
$\{ \gamma^{2}, \gamma^{8}, \gamma^{12}, \gamma^{23}, \gamma^{41}, \gamma^{87}, \gamma^{93}, \gamma^{97}, \gamma^{108}, \gamma^{126}, \gamma^{172}, \gamma^{178}, \gamma^{182}, \gamma^{193}, \gamma^{211}\}$\\
$\{ \gamma^{2}, \gamma^{15}, \gamma^{16}, \gamma^{40}, \gamma^{56}, \gamma^{87}, \gamma^{100}, \gamma^{101}, \gamma^{125}, \gamma^{141}, \gamma^{172}, \gamma^{185}, \gamma^{186}, \gamma^{210}, \gamma^{226}\}$\\
$\{ \gamma^{0}, \gamma^{9}, \gamma^{32}, \gamma^{35}, \gamma^{37}, \gamma^{85}, \gamma^{94}, \gamma^{117}, \gamma^{120}, \gamma^{122}, \gamma^{170}, \gamma^{179}, \gamma^{202}, \gamma^{205}, \gamma^{207}\}$\\
$\{ \gamma^{0}, \gamma^{7}, \gamma^{19}, \gamma^{27}, \gamma^{49}, \gamma^{85}, \gamma^{92}, \gamma^{104}, \gamma^{112}, \gamma^{134}, \gamma^{170}, \gamma^{177}, \gamma^{189}, \gamma^{197}, \gamma^{219}\}$\\
$\{ \gamma^{0}, \gamma^{17}, \gamma^{34}, \gamma^{51}, \gamma^{68}, \gamma^{85}, \gamma^{102}, \gamma^{119}, \gamma^{136}, \gamma^{153}, \gamma^{170}, \gamma^{187}, \gamma^{204}, \gamma^{221}, \gamma^{238}\}$\\
\end{longtable}

The first 35 orbits are full length 3-quasi orbits (85 subspaces) and the remaining one is an orbit with 17 subspaces. Then the code $\codeC$ is a $[8,4,2992,4]$-code.

Note that, despite the factor 1/3 guaranteed by Lemma \ref{lema9}, this code has a large number of codewords. Not so far compared to the cyclic code of Example \ref{ejemplo2}.
\end{example}

\begin{example}[Quasi-cyclic self-dual subspace codes]
Let $\gamma$ be a primitive root of $x^4+x+1$ and use this polynomial to generate the field $\F_{2^4}$
\begin{enumerate}[$(a)$]
\item Let $\codeC$ be the code
\[\codeC = \{\{\gamma^{2}, \gamma^{3}, \gamma^{6}\}, \{ \gamma^{5}, \gamma^{6}, \gamma^{9}\}, \{ \gamma^{8}, \gamma^{9}, \gamma^{12}\}, \{ \gamma^{0}, \gamma^{11}, \gamma^{12}\}, \{ \gamma^{0}, \gamma^{3}, \gamma^{14}\}\}.\]
Than is $\codeC$ a 3-quasi cyclic subspace code with parameters $[4,2,5,2]$.
\item Let $\codeC$ be the code
\[\codeC = \{\{\gamma^{2}, \gamma^{7}, \gamma^{12}\}, \{\gamma^{4}, \gamma^{9}, \gamma^{14}\} \}.\]

$\codeC$ is a 5-quasi cyclic orbit code with parameters $[4,2,2,4]$. 

These codes are the unique quasi-cyclic self-dual codes in projective space $\P_2(4)$.
\end{enumerate}
\end{example}

\begin{example}[Quasi-cyclic self-dual subspace code]
The following code $\codeC$ is the unique quasi-cyclic self-dual code in projective space $\P_2(6)$. 
\begin{align*}
\codeC =  & \{\{ \gamma^{9}, \gamma^{24}, \gamma^{30}, \gamma^{33}, \gamma^{43}, \gamma^{50}, \gamma^{51}\}, \{ \gamma^{1}, \gamma^{8}, \gamma^{9}, \gamma^{30}, \gamma^{45}, \gamma^{51}, \gamma^{54}\}\\
& \ \ \{\gamma^{3}, \gamma^{9}, \gamma^{12}, \gamma^{22}, \gamma^{29}, \gamma^{30}, \gamma^{51}\}\}.
\end{align*}
$\codeC$  is a 21-quasi cyclic orbit code with parameters $[6,3,3,2]$. 
\end{example}

\begin{example}
Let $\gamma$ be a primitive root of $x^8+x^4+x^3+x^2+1$ and use this polynomial to generate the field $\F_{2^{8}}$.
Let $\codeC$ be the code
\begin{align*}
\codeC &= \{\{\gamma^{27}, \gamma^{34}, \gamma^{46}, \gamma^{54}, \gamma^{76}, \gamma^{112}, \gamma^{119}, \gamma^{131}, \gamma^{139}, \gamma^{161}, \gamma^{197}, \gamma^{204}, \gamma^{216}, \gamma^{224}, \gamma^{246}\} \\
& \ \ \ \ \ \ \{\gamma^{5}, \gamma^{27}, \gamma^{63}, \gamma^{70}, \gamma^{82}, \gamma^{90}, \gamma^{112}, \gamma^{148}, \gamma^{155}, \gamma^{167}, \gamma^{175}, \gamma^{197}, \gamma^{233}, \gamma^{240}, \gamma^{252}\}\}.
\end{align*}
$\codeC$  is a 85-quasi cyclic orbit code with parameters $[8,4,2,4]$.  This is the unique quasi-cyclic, self-dual code in projective space $\P_2(8)$.
\end{example}

\section{Classification of some Quasi-orbits  in $\F_q^n$}

The following tables have been calculated using GAP System for Computational Discrete Algebra and some algorithms programmed in Java language.

\begin{enumerate}
\item[$(\mathrm{I})$] $n=8$
\begin{enumerate}[$(1)$]
	\item $m=3$. 
	\begin{enumerate}[$(a)$]
		\item Number of Full length 3-Quasi Orbits with 85 subspaces. 
		
		\begin{center}
			\begin{tabular}{|c|c|c|}
				\hline 
				$k\setminus d$ & 2 & 4 \\ 
				\hline 
				1 & 3 & 0 \\ 
				\hline 
				2 & 102 & 24 \\ 
				\hline 
				3 & 99 & 1044 \\ 
				\hline 
				4 & 96 & 2262 \\ 
				\hline 
			\end{tabular}
		\end{center}
		\vskip0.5cm
		\item Number of degenerate 3-Quasi Orbits: 0. 
	\end{enumerate}

	\item $m=5$. 
	\begin{enumerate}[$(a)$]
		\item Number of 5-Quasi Orbits.
		
		\begin{center}
			\begin{tabular}{|c|c|c|c|}
				\hline 
				$k\setminus d$ & 2 & 4 & 6\\ 
				\hline 
				1 & 5 & 0 & 0\\ 
				\hline 
				2 & 120 & 95 & 0\\ 
				\hline 
				3 & 225 & 1680 & 0\\ 
				\hline 
				4 & 120 & 3590 & 240\\ 
				\hline 
			\end{tabular}
		\end{center}
		\vskip0.5cm
		\item Number of Full length 5-Quasi Orbits with 51 subspaces.
		
		\begin{center}
			\begin{tabular}{|c|c|c|c|}
				\hline 
				$k\setminus d$ & 2 & 4 & 6\\ 
				\hline 
				1 & 5 & 0 & 0 \\ 
				\hline 
				2 & 120 & 90 & 0\\ 
				\hline 
				3 & 225 & 1680 & 0 \\ 
				\hline 
				4 & 120 & 3570 & 240\\ 
				\hline 
			\end{tabular}
		\end{center}
		\vskip0.5cm
		\item Number of degenerate 5-Quasi Orbits with 17 subspaces.
		
		\begin{center}
			\begin{tabular}{|c|c|c|c|}
				\hline 
				$k\setminus d$ & 2 & 4 & 6 \\ 
				\hline 
				1 & 0 & 0 & 0\\ 
				\hline 
				2 & 0 & 5 & 0\\ 
				\hline 
				3 & 0 & 0 & 0\\ 
				\hline 
				4 & 0 & 20 & 0\\
				\hline 
			\end{tabular}
		\end{center}
	\end{enumerate}
	
	\item $m=15$. 
	\begin{enumerate}[$(a)$]
		\item Number of Full length 15-Quasi Orbits with 17 subspaces. 
		
	\begin{center}
		\begin{tabular}{|c|c|c|c|}
			\hline 
			$k\setminus d$ & 2 & 4 & 6\\ 
			\hline 
			1 & 15 & 0 & 0 \\ 
			\hline 
			2 & 120 & 510 & 0\\ 
			\hline 
			3 & 120 & 4380 & 1215 \\ 
			\hline 
			4 & 120 & 6000 & 5670 \\ 
			\hline 
		\end{tabular}
	\end{center}
	\vskip0.5cm
	\item Number of degenerate 15-Quasi Orbits: 0. 
\end{enumerate}

\item $m=17$. 
\begin{enumerate}[$(a)$]
	\item Number of 17-Quasi Orbits.
	
	\begin{center}
		\begin{tabular}{|c|c|c|c|c|c|}
			\hline 
			$k\setminus d$ & 0 & 2 & 4 & 6 & 8\\ 
			\hline 
			1 & 0 & 17 & 0 & 0 & 0\\ 
			\hline 
			2 & 0 & 34 & 697 & 0 & 0\\ 
			\hline 
			3 & 0 & 357 & 2040 & 4080 & 0 \\ 
			\hline 
			4 & 17 & 0 & 4930 & 8160 & 340 \\
			\hline 
		\end{tabular}
	\end{center}
	\vskip0.5cm
	\item Number of Full length 17-Quasi Orbits with 15 subspaces.
	
	\begin{center}
		\begin{tabular}{|c|c|c|c|c|}
			\hline 
			$k\setminus d$ & 2 & 4 & 6 & 8\\ 
			\hline 
			1 & 17 & 0 & 0 & 0\\ 
			\hline 
			2 & 34 & 680 & 0 & 0\\ 
			\hline 
			3 & 357 & 2040 & 4080 & 0 \\ 
			\hline 
			4 & 0 & 4930 & 8160 & 272\\ 
			\hline 
		\end{tabular}
	\end{center}
	\vskip0.5cm
	
	\item Number of degenerate 17-Quasi Orbits with 5 subspaces.
	
	\begin{center}
		\begin{tabular}{|c|c|c|c|c|}
			\hline 
			$k\setminus d$ & 2 & 4 & 6 & 8\\ 
			\hline 
			1 & 0 & 0 & 0 & 0\\ 
			\hline 
			2 & 0 & 17 & 0 & 0\\ 
			\hline 
			3 & 0 & 0 & 0 & 0 \\ 
			\hline 
			4 & 0 & 0 & 0 & 68 \\
			\hline 
		\end{tabular}
	\end{center}
	\vskip0.5cm
	
	\item Number of degenerate 17-Quasi Orbits with 1 subspaces.
	
	\begin{center}
		\begin{tabular}{|c|c|}
			\hline 
			$k\setminus d$ & 0 \\ 
			\hline 
			1 & 0\\ 
			\hline 
			2 & 0\\ 
			\hline 
			3 & 0\\ 
			\hline 
			4 & 17\\
			\hline 
		\end{tabular}
	\end{center}
\end{enumerate}
	
	\item $m=51$. 
	\begin{enumerate}[$(a)$]
		\item Number of Full length 51-Quasi Orbits with 5 subspaces. 
		
	\begin{center}
		\begin{tabular}{|c|c|c|c|c|}
			\hline 
			$k\setminus d$ & 2 & 4 & 6 & 8\\ 
			\hline 
			1 & 51 & 0 & 0 & 0 \\ 
			\hline 
			2 & 102 & 2040 & 0 & 0\\ 
			\hline 
			3 & 51 & 6120 & 13260 & 0 \\ 
			\hline 
			4 & 0 & 5610 & 32640 & 1836 \\ 
			\hline 
		\end{tabular}
	\end{center}
	\vskip0.5cm
	\item Number of degenerate 51-Quasi Orbits: 0. 
\end{enumerate}

	\item $m=85$. 
	\begin{enumerate}[$(a)$]	
	\item Number of 85-Quasi Orbits.
	\begin{center}
		\begin{tabular}{|c|c|c|c|c|c|}
			\hline 
			$k\setminus d$ & 0 & 2 & 4 & 6 & 8\\ 
			\hline 
			1 & 0 & 85 & 0 & 0 & 0 \\
			\hline 
			2 & 85 & 0 & 3570 & 0 & 0\\ 
			\hline 
			3 & 0 & 1785 & 0 & 30600 & 0\\ 
			\hline 
			4 & 340 & 0 & 17850 & 0 & 48960 \\
			\hline 
		\end{tabular}
	\end{center}	
	\vskip0.5cm
	
	\item Number of Full length 85-Quasi Orbits with 3 subspaces.
	
	\begin{center}
		\begin{tabular}{|c|c|c|c|c|}
			\hline 
			$k\setminus d$ & 2 & 4 & 6 & 8\\ 
			\hline 
			1 & 85 & 0 & 0 & 0\\ 
			\hline 
			2 & 0 & 3570 & 0 & 0\\ 
			\hline 
			3 & 1785 & 0 & 30600 & 0 \\ 
			\hline 
			4 & 0 & 17850 & 0 & 48960 \\
			\hline 
		\end{tabular}
	\end{center}	
	\vskip0.5cm
	
	\item Number of degenerate 85-Quasi Orbits with 1 subspaces.
	
	\begin{center}
		\begin{tabular}{|c|c|}
			\hline 
			$k\setminus d$ & 0\\ 
			\hline 
			1 & 0\\ 
			\hline 
			2 & 85\\ 
			\hline 
			3 & 0\\ 
			\hline 
			4 & 340\\
			\hline 
		\end{tabular}
	\end{center}
\end{enumerate}	
\end{enumerate}
\end{enumerate}

\section{Conclusions and future work}
In this paper we present some classifications of full length and degenerate orbits of a subspace in projective space $\P_q(n)$, for $n=!!!!$. In addition, $m$-quasi cyclic subspaces codes are defined, as a natural generalization of cyclic subspace codes and some classifications of $m$-quasi orbits are shown.

For future investigations, we can consider the generalization of well knows results about cyclic subspaces codes with degenerate orbits and the connection between $m$-quasi cyclic subspaces codes and orbits codes.

\section{Acknowledgements}
The first Author would like to thank the hospitality of the Institute for Algebra and Geometry at Otto von Guericke University - Magdeburg, where part of this work was carried out. Also acknowledges and thanks the financial support of the Deutscher Akademischer Austausch Dienst.

\bibliographystyle{abbrv} 
\bibliography{Isgutier.07.2016}

\begin{thebibliography}{10}

\bibitem{Etzion1}
E.~Ben-Sasson, T.~Etzion, A.~Gabizon, and N.~Raviv.
\newblock Subspace polynomials and cyclic subspace codes.
\newblock {\em arXiv:1404.7739}, 2015.

\bibitem{Etzion2}
T.~Etzion and A.~Vardy.
\newblock Error-correcting codes in projective space.
\newblock {\em IEEE Transactions on Information Theory}, 57(2), 2011.

\bibitem{Fragouli}
C.~Fragouli, J.-Y.~L. Boudec, and J.~Widmer.
\newblock Network coding: An instant primer.
\newblock {\em ACM SIGCOMM Computer Communication Review}, 36:63--68, 2006.

\bibitem{GAP}
GAP.
\newblock Groups, algorithms, programming - a system for computational discrete
  algebra.
\newblock \url{http://www.gap-system.org/}.

\bibitem{Gluesing}
H.~Gluesing-Luerssen, K.~Morrison, and C.~Troha.
\newblock Cyclic orbit codes and stabilizer subfields.
\newblock {\em Advances in Mathematics of Communications}, 9(2):177--197, May
  2015.

\bibitem{SubspaceCodesTable}
D.~Heinlein, M.~Kiermaier, S.~Kurz, and A.~Wassermann.
\newblock Tables of subspace codes.
\newblock \url{http://subspacecodes.uni-bayreuth.de/}.

\bibitem{SubspaceCodesTable2}
D.~Heinlein, M.~Kiermaier, S.~Kurz, and A.~Wassermann.
\newblock Tables of subspace codes.
\newblock {\em arXiv preprint arXiv:1601.02864}, 2016.

\bibitem{Koetter1}
R.~Koetter and F.~R. Kschischang.
\newblock Coding for errors and erasures in random network coding.
\newblock {\em IEEE Transactions on Information Theory}, 54:3579 -- 3591, 2008.

\bibitem{Kurz}
A.~Kohnert and S.~Kurz.
\newblock Construction of large constant dimension codes with a prescribed
  minimum distance.
\newblock {\em Mathematical Methods in Computer Science. Lecture Notes in
  Computer Science}, 5393:31--42, 2008.

\bibitem{Rosenthal}
A.-L. Trautmann, F.~Manganiello, M.~Braun, and J.~Rosenthal.
\newblock Cyclic orbit codes.
\newblock {\em IEEE Transactions on Information Theory}, 59:7386--7404, 2013.

\end{thebibliography}

\appendix
\section{Appendix}

\subsection{Used Methods for constructing subspace cyclic codes}

\subsubsection{Getting orbits}
GAP is used to calculate the subspaces of a specific Grassmannian $G_q(n,k)$ for some $n,q,k$.

For each space the following procedure is performed: Let $U\in G_q(n,k)$.
\begin{enumerate}
	\item Set $b = 0$
	\item For each $\alpha^i \in U$, $b = b + 2^i$
	\item Save $b_U:=b$
\end{enumerate}
This process makes easier the management of subspaces.

Then using Java Language, this information is collected and for each number $b_U$ perform the following process:
\begin{enumerate}
	\item Take an empty list $L_U$ and insert the element $b_U$
	\item Set $b:=b_U$, if $b_U < q^n-2$ then it is added $b_{\alpha U}:=2*b$ to the list, else $b_{\alpha U}:=2*(b-q^n-2)+1$ is added to the list.
	\item In $L$ are all elements of the orbit associated with $U$
	\item Save $L$
\end{enumerate}
This process is performed in parallel with the search of subspace in GAP, to optimize the process and stop GAP, once we have the number of orbits in $G_q(n, k)$.

\subsubsection{Calculating the minimum distance of an Orbit}
To calculate the distance between two subspaces the following process is performed.

Let's take $b_1, b_2$ two numbers representing two subspaces, the operation $\mathrm{AND}(\&)$ calculates the representative number of subspace intersection of the spaces associated with $b_1$ and $b_2$; The operation $\mathrm{AND}$ is the operation that takes a bit of each number and the corresponding bit set to 1 if both corresponding bits are 1 and 0 otherwise.

So, if $U \cap V = W$, then $b_U$ $\&$ $b_V = b_W$

For the dimension of a space, it takes 
\[\dim(U) = \log_q{\text{\# bits to 1 in its representation}}\]
for the minimum distance from an orbit, we took a $ U $ basis space and calculate the distance with all other spaces in orbit; Is not necessary to make all comparisons since:
\[d(\alpha^i U,\alpha^j U)  =  d(U,\alpha^{j-i} U).\]

\subsubsection{Getting subspace cyclic codes}
For cyclic subspaces codes we calculate first the minimum distance of joining any two orbits.

To find the minimum distance of joining 2 orbits, take any subspace $U$ in the first orbit, and calculate the distance between this space and all spaces in the second orbit, the minimum will be the minimum distance between the two orbits. It is not necessary to make all comparisons since:
\[d(\alpha^i U,\alpha^j V)  =  d(U,\alpha^{j-i} V).\]

After this we get a graph $G$ with vertices $V(G)$ the orbits of the Grassmannian, and for every pair of vertices, we add an edge if and only if the corresponding orbits have minimum distance (the distance of join both orbits) higher to a $d$ preestablished.

Taking this graph each clique (A clique is a complete graph, i.e., any pair of vertices are adjacent). Is a cyclic network code.
In this graph we can set some bounds on the maximum size of network code:

$\Omega(G)$ be noticed as the number of the clique (i.e. the size of the largest clique). Find cliques in a graph is considered an NP-complete problem
\end{document}